\newtheorem{thm}{Theorem}
\newtheorem{prop}{Proposition}
\theoremstyle{definition}
\newtheorem{dfn}{Definition}
\newtheorem{rmk}{Remark}
\newtheorem{example}{Example}
\begin{document}
\title[Betti numbers of graded modules]{Betti numbers of graded modules and the {M}ultiplicity {C}onjecture in the non-{C}ohen-{M}acaulay case}
\date{\today \qquad(Preliminary version)}
\author{Mats Boij} \address{Department of Mathematics, KTH \\ S--100
  44 Stockholm \\ Sweden} \email{boij@kth.se}

\author{Jonas Söderberg} \address{Department of Mathematics, KTH \\
  S--100 44 Stockholm \\ Sweden} \email{jonasso@math.kth.se}


\begin{abstract} 
We use the results by Eisenbud and Schreyer~\cite{Eisenbud-Schreyer} to prove that any Betti diagram of a graded module over a standard graded polynomial ring is a positive linear combination Betti diagrams of modules with a pure resolution. This implies the Multiplicity Conjecture of Herzog, Huneke and Srinivasan~\cite{Herzog-Srinivasan} for modules that are not necessarily Cohen-Macaulay. We give a combinatorial proof of the convexity of the simplicial fan spanned by the pure diagrams. 
\end{abstract}

\maketitle

\section{Introduction}
The formula for the multiplicity of a standard graded algebra with a pure resolution found by C.~Huneke and M.~Miller~\cite{Huneke-Miller}, led to the formulation of the Multiplicity Conjecture by C.~Huneke and H.~Srinivasan, which was later generalized and published by J.~Herzog and H.~Srinivasan~\cite{Herzog-Srinivasan}. 

In a series of papers\footnote{Citations will be added.} these versions of the Multiplicity conjecture have been proven in many cases. Recently D.~Eisenbud and F.-O.~Schreyer proved the conjecture in the Cohen-Macaulay case by proving a set of conjectures formulated by the authors~\cite{Boij-Soderberg} on the set of possible Betti diagrams up to multiplication by positive rational numbers.  

In the work of D.~Eisenbud and F.-O.~Schreyer, they introduce a set of linear functionals defined on the space of possible Betti diagrams. The linear functionals are given by certain cohomology tables of vector bundles on $\mathbb P^{n-1}$ and they show that the supporting hyperplanes of the exterior facets of the simplicial fan given by the pure Betti diagrams are given by the vanishing of these linear functionals, while the functionals are non-negative on the Betti diagram of any minimal free resolution. 

In this paper we generalize the contruction given in our previous paper in order to include Cohen-Macaulay pure Betti diagrams of various codimensions. We show that the linear functionals, similar to the ones introduced  by D.~Eisenbud and F.-O.~Schreyer define the supporting hyperplanes of the simplicial fan. Furthermore, these new linear functionals are limits of the functionals given by D.~Eisenbud and F.-O.~Schreyer which allows us to conclude that all Betti diagrams of graded modules can be uniquely written as a positive linear combination of pure diagrams in a totally ordered chain. Together with the existence of modules with pure resolutions, proved by D.~Eisenbud, G.~Fl{\o}ystad and J.~Weyman~\cite{Eisenbud-Floystad-Weyman} in characteristic zero, and by D.~Eisenbud and F.-O.~Schreyer~\cite{Eisenbud-Schreyer} in general, this gives a complete classificationo of the possible Betti diagrams up to multiplication by scalars. 

As a consequence, we get the Multiplicity Conjecture for algebras and modules that are not necessarily Cohen-Macaulay. In fact, we get a stronger version of the inequalities of the Multiplicity Conjecture in terms of the Hilbert series of the module which is bounded from below by the Hilbert series corresponding to the lowest shifts in a minimal free resolution while it is bounded from above by the Hilbert series corresponding to the highest shifts in the first $s+1$ terms of the resolution, where $s$ is the codimension of the module. 

We also give a combinatorial proof of the convexity of the simplicial fan spanned by the pure diagrams, even though this convexity is an implicit consequence of the results involving the linear functionals. 

Furthermore, we show that if we choose the basis of pure diagrams in a certain way, all the coefficients in the expansion of an actual Betti diagram into a chain of pure diagrams are non-negative integers.  

\section{The partially ordered set of pure Betti diagrams}
Let $R=k[x_1,x_2,\dots,x_n]$ be the polynomial ring with the standard grading. For any finitely generated graded module $M$, we have a minimal free resolution
$$
0\longrightarrow F_n \longrightarrow F_{n-1}\longrightarrow \cdots \longrightarrow F_1\longrightarrow F_0\longrightarrow M\longrightarrow 0
$$
where 
$$
F_i = \bigoplus_{j\in \mathbb Z} R(-j)^{\beta_{i,j}}, \qquad i=0,1,\dots,n,
$$
and we get that the Hilbert series of $M$ can be recovered from the Betti  numbers, $\beta_{i,j}$, by
$$
H(M,t) = \frac{1}{(1-t)^n} \sum_{i=0}^n \sum_{j\in \mathbb Z} (-1)^i \beta_{i,j}t^j.
$$
It was noted by J.~Herzog and M.~K\"uhl~\cite{Herzog-Kuhl} that this gives us $s$ linearly independent equations 
$$
\sum_{i=0}^n \sum_{j\in \mathbb Z} (-1)^i \beta_{i,j} j^m=0, \qquad m=0,1,\dots,s-1
$$
where $s$ is the codimension of $M$. 

Furthermore, they proved that in the case when $M$ is Cohen-Macaulay and the resolution is \emph{pure}, i.e., if $F_i=R(-d_i)$ for some integers $d_0,d_1,\dots,d_s$, we get a unique solution to these equations given by 
$$
\beta_{i,j} = \left\{\begin{array}{rl}
\displaystyle(-1)^i\prod_{\underset{j\ne i}{j=0}}^s\frac{1}{d_j-d_i},&\qquad j=d_i\\
0,&\qquad j\ne d_i
\end{array}
\right.,
$$
for $i=0,1,\dots,s$.

\begin{dfn} 
For an increasing sequence of integers $\underline d=(d_0,d_1,\dots,d_s)$, where $0\le s\le n$, we denote by $\pi(\underline d)$ the matrix in $\mathbb Q^{n-1}\times \mathbb Q^{\mathbb Z}$ given by 
$$
\pi(\underline d)_{i,j}=
(-1)^i\prod_{\underset{j\ne i}{j=0}}^s\frac{1}{d_j-d_i}
,\qquad\hbox{for $j=d_i$}
$$
and zero elsewhere. We will call this the \emph{pure diagram} given by the degree sequence $\underline d=(d_0,d_2,\dots,d_s)$. We will use the notation $d_i(\pi)$ to denote the degree, $d_i$, when $\pi=\pi(\underline d)=\pi(d_0,d_1,\dots,d_s)$.

For degree sequences with $d_0=0$, we will also use the \emph{normalized pure diagram} 
$$
\bar\pi(\underline d) = d_1d_2\cdots d_s\pi(\underline d)
$$
so that normalized pure diagrams have $\bar\pi_{0,0}=1$. 
\end{dfn}

We define a partial ordering on the set of pure diagrams, extending the ordering used for Cohen-Macaulay diagrams of a fixed codimension. 

\begin{dfn} We say that $\pi(d_0,d_1,\dots,d_s)\le \pi (d_0',d_1',\dots,d_t')$
if the $s\ge t$ and $d_i\le d_i'$ for $i=0,1,\dots,t$. 
\end{dfn}

As in the case of Cohen-Macaulay pure diagrams, we get a simplicial structure given by the maximal chains of pure diagrams, which spans simplicial cones. In order to have maximal chains in this setting, we have to fix a bound on the region we are considering. We can do this by restricting the degrees to be in a given region $M+i\le d_i\le N+i$. We will denote the subspace generated by Betti diagrams with these restrictions by $B_{M,M}=\mathbb Q^{n+1}\times \mathbb Q^{N-M+1}$. Furthermore, we denote by $B_{M,N}^s$ the subspace of $B_{M,N}$ of diagrams satisfying the $s$ first Herzog-K\"uhl equations. 

\begin{prop} \label{prop:basis}
  For $s=0,1,\dots,n$, we have that any maximal chain of pure diagrams of codimension at least $s$ in $B_{M,N}$ form a basis for $B_{M,N}^s$.
\end{prop}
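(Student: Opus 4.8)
The plan is to reduce the statement to two claims: that a maximal chain of pure diagrams of codimension at least $s$ in $B_{M,N}$ has exactly $\dim B_{M,N}^s$ elements, and that these elements are linearly independent. For the setup I would first note that every pure diagram of codimension $c\ge s$ with degrees in the strip $M+i\le d_i\le N+i$ lies in $B_{M,N}^s$, since by the Herzog--K\"uhl computation recalled above its entries satisfy the first $c$, hence the first $s$, of the equations $\sum_{i,j}(-1)^i\beta_{i,j}j^m=0$. Moreover $\dim B_{M,N}=(n+1)(N-M+1)$, and the $s$ Herzog--K\"uhl functionals are linearly independent on $B_{M,N}$ (pick $s$ table positions with pairwise distinct internal degrees and apply Vandermonde), so $\dim B_{M,N}^s=(n+1)(N-M+1)-s$. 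Hence it suffices to find that many linearly independent pure diagrams on each maximal chain.

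My first real step would be to pin down the covering relations. I would show that $\pi(d_0,\dots,d_{s'})$ is covered in this poset exactly by diagrams of two kinds: (a) $\pi(d_0,\dots,d_{i-1},d_i+1,d_{i+1},\dots,d_{s'})$, whenever the result is again strictly increasing and still inside the strip; and (b) $\pi(d_0,\dots,d_{s'-1})$, but only when $s'>s$ and $d_{s'}=N+s'$, for otherwise $\pi(d_0,\dots,d_{s'-1},d_{s'}+1)$ lies strictly in between. Symmetrically, a diagram is covered from below only by appending one entry or by decreasing one entry by one. A short case analysis then shows that every diagram other than $\pi(N,N+1,\dots,N+s)$ has an up-cover and every diagram other than $\pi(M,M+1,\dots,M+n)$ has a down-cover, so every maximal chain runs from $\pi(M,M+1,\dots,M+n)$ up to $\pi(N,N+1,\dots,N+s)$.

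Next I would count the diagrams on such a chain using the potential $\Psi(\pi(d_0,\dots,d_{s'}))=\sum_{i=0}^{s'}(d_i-(M+i))+(n-s')(N-M)$. A cover of type (a) raises $\Psi$ by $1$, while a cover of type (b) --- which deletes an entry equal to $N+s'$ --- leaves $\Psi$ unchanged; the codimension-dropping covers are precisely the type (b) covers, of which there are $n-s$ along any maximal chain. Since $\Psi$ equals $0$ at $\pi(M,\dots,M+n)$ and $(n+1)(N-M)$ at $\pi(N,\dots,N+s)$, a maximal chain has $(n+1)(N-M)$ covers of type (a), hence $(n+1)(N-M)+(n-s)+1=(n+1)(N-M+1)-s$ diagrams, matching $\dim B_{M,N}^s$.

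Finally I would prove linear independence of the diagrams $\pi^{(1)}<\pi^{(2)}<\dots<\pi^{(r)}$ on a maximal chain by producing, for each $k<r$, a table position at which $\pi^{(k)}$ is nonzero while every later $\pi^{(l)}$ vanishes; the resulting staircase shape forces all coefficients of a dependence relation to be zero, starting from that of $\pi^{(1)}$. If the step from $\pi^{(k)}$ to $\pi^{(k+1)}$ is of type (a) and increases row $i$, I would use the position in row $i$ at internal degree $d_i(\pi^{(k)})$: the codimension is non-increasing along the chain, so once row $i$ disappears it never returns, and while it survives its degree only increases, so no $\pi^{(l)}$ with $l>k$ is supported there. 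If the step is of type (b) and deletes row $s'$, I would use the position in row $s'$ at degree $N+s'$: every later diagram has codimension at most $s'-1$ and hence no row $s'$ at all. The step I expect to require the most care is the description of the covering relations --- in particular the constraint $d_{s'}=N+s'$ for a type (b) cover; once that is in place the count is a telescoping computation and the independence argument is just bookkeeping.
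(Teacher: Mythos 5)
Your proposal is correct and follows the same approach as the paper's proof: show that a maximal chain has exactly $\dim B_{M,N}^s=(n+1)(N-M+1)-s$ elements by accounting for the $(n+1)(N-M)$ degree-raising covers and the $n-s$ codimension-dropping covers, and prove linear independence by exhibiting, for each cover in the chain, a table position where the lower diagram is nonzero while all higher diagrams vanish. You supply details the paper leaves implicit (the precise covering relations with the constraint $d_{s'}=N+s'$, the endpoints of maximal chains, and the verification that codimension-$\ge s$ pure diagrams lie in $B_{M,N}^s$), but the underlying argument is the same.
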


\begin{proof}
  For any interval of length one $\pi<\pi'$ there is a unique non-zero entry in $\pi$ which is zero in all pure diagrams above $\pi'$. Thus the pure diagrams in any maximal chain are linearly independent. The number of elements in any maximal chain of pure diagrams of codimension at least $s$ is $(n+1)(N-M)+n-s+1$, since we have $n+1$ positions that has to be raised $N-M$ steps and then $n-s+1$ times when the codimension is lowered by one. On the other hand, we have that the dimension of $B_{M,N}^s$ is $(n+1)(N-M+1)-s$, since we have $s$ independent equations on $B_{M,N}$. 
\end{proof}

By looking at the order in which the different positions of a Betti diagram in $B_{M,N}$ disappear when going along a maximal chain of pure diagrams we get the following observation: 

\begin{prop} \label{prop:tableaux}
The maximal chains of pure diagrams in $B_{M,N}$ are in one to one correspondence with numberings of the entries 
of an $(N-M+1)\times (n+1)$-matrix which are increasing to the left and downwards.
\end{prop}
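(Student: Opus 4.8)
The plan is to translate pure diagrams, together with the covering relations of the ordering, into purely combinatorial data. To a pure diagram $\pi=\pi(d_0,\dots,d_s)$ in $B_{M,N}$ I associate the tuple $e(\pi)=(e_0,\dots,e_n)\in\{0,1,\dots,N-M+1\}^{n+1}$ with $e_i=d_i-M-i$ for $i\le s$ and $e_i=N-M+1$ for $i>s$; the value $N-M+1$ is to be read as ``homological position $i$ has been dropped''. Since $\underline d$ is strictly increasing and satisfies $M+i\le d_i\le N+i$, the tuple $e(\pi)$ is weakly increasing with $e_0\le N-M$, the codimension $s$ is the largest index with $e_s\le N-M$, and $\pi$ is recovered from $e(\pi)$. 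Unwinding the definition of the order, $\pi\le\pi'$ holds exactly when $e(\pi)\le e(\pi')$ coordinatewise, so that $B_{M,N}$ is, as a poset, the set
$$
P=\{\,(e_0,\dots,e_n):0\le e_0\le e_1\le\cdots\le e_n\le N-M+1,\ e_0\le N-M\,\}
$$
with the product order. It has least element $(0,\dots,0)$ and greatest element $(N-M,N-M+1,\dots,N-M+1)$, so every maximal chain runs from the former to the latter; by Proposition~\ref{prop:basis} (with $s=0$) these chains all have $(n+1)(N-M+1)$ elements.

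Next I would pin down the covering relations of $P$: one element covers another, $e$, exactly when it is obtained from $e$ by raising a single coordinate by $1$ and still lies in $P$. That such a step is a covering relation is immediate from the product order. For the converse, suppose $e<e'$ but $e'$ is not of that form; let $i$ be the least index with $e_i<e'_i$ and let $f$ be $e'$ with its $i$-th coordinate lowered by one. Using that $e$ and $e'$ are weakly increasing and agree in the coordinates before $i$, one checks $f\in P$ and $e<f<e'$, so the step was not a covering relation. A covering step that raises the last non-dropped coordinate from $N-M$ to $N-M+1$ is exactly a step that lowers the codimension by one, so this lemma already records that the codimension can drop only when the bottom shifts $d_0,\dots,d_s$ are as large as $B_{M,N}$ permits. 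Consequently a maximal chain in $B_{M,N}$ is the same thing as a sequence of unit coordinate increments leading from $(0,\dots,0)$ up to $(N-M,N-M+1,\dots,N-M+1)$.

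Finally I would set up the correspondence with numberings. Along such a chain each coordinate $i$ is raised, one unit at a time, from $0$ to its final value ($N-M$ if $i=0$, and $N-M+1$ if $i\ge1$). Record in the cell $(r,i)$ of an $(N-M+1)\times(n+1)$ array---with rows $r=1,\dots,N-M+1$ and columns $i=0,\dots,n$---the position in the chain at which coordinate $i$ passes from level $r-1$ to level $r$. This fills every cell except the corner $(N-M+1,0)$, which would record the increment of $e_0$ past $N-M$ and is never performed; assign to it the last value $(n+1)(N-M+1)$, the only assignment compatible with the monotonicity below. By the covering lemma the increment recorded in $(r,i)$ is legal at a given moment precisely when those recorded in $(r-1,i)$ and in $(r,i+1)$ are already done, and these prerequisites also suffice. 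Hence a bijective numbering of the array by $1,\dots,(n+1)(N-M+1)$ arises from a maximal chain if and only if each entry exceeds the entries of the cell above it and of the cell to its right---that is, the entries increase to the left and downwards---and processing the cells in increasing order of their entry recovers the chain. This is the asserted bijection.

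The step I expect to be the main obstacle is the covering-relation lemma of the second paragraph. Showing that a drop in codimension is itself a covering relation, and only at the extreme shifts, is precisely what motivates the auxiliary ``dropped'' level $N-M+1$; and it has to be reconciled with the minor off-by-one bookkeeping---the array carries one cell more than a maximal chain has steps, the surplus being the inert corner $(N-M+1,0)$.
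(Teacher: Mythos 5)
The paper states Proposition~\ref{prop:tableaux} as an observation and gives only an example, not a proof, so there is no argument of the authors' to compare against; your proof fills that gap and it is correct. The main content is the order-isomorphism $\pi\mapsto e(\pi)$ onto the lattice of weakly increasing tuples in $\{0,\dots,N-M+1\}^{n+1}$ with $e_0\le N-M$, the identification of covering relations with unit coordinate increments (your argument with $f=e'-\delta_i$ is complete: either $e'_i-e_i\ge2$ or some later coordinate also increased, and in both cases $e<f<e'$), and the standard linear-extension/standard-tableau translation, with the inert corner $(N-M+1,0)$ absorbing the off-by-one between the $(n+1)(N-M+1)$ cells and the $(n+1)(N-M+1)-1$ covering steps. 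One small remark worth recording: applying your bijection to the chain displayed in the paper's example after this proposition gives the tableau with first row $5,3,1$ and second row $6,4,2$, not $4,3,1$ over $6,5,2$ as printed; the printed tableau is impossible for that chain because column $0$ cannot advance to row $1$ while column $1$ is still in row $0$ (that would violate $e_0\le e_1$). So the example in the paper appears to have its tableau mismatched with its chain, and your construction, which coincides with the intended ``order of disappearance'' rule, detects this.
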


\begin{example}
For $n=2$, $M=0$ and $N=1$, the numbering 
$$
\begin{array}{|c|c|c|}\hline
4&3&1\\ \hline
6&5&2\\ \hline
\end{array}
$$
corresponds to the maximal chain
$$
\begin{array}{|ccc|}\hline 
\ast&\ast&\ast\\ -&-&-\\ \hline 
\end{array}
<
\begin{array}{|ccc|}\hline 
\ast&\ast&-\\ -&-&\ast\\ \hline 
\end{array}
<
\begin{array}{|ccc|}\hline 
\ast&\ast&-\\ -&-&-\\ \hline 
\end{array}
<
\begin{array}{|ccc|}\hline 
\ast&-&-\\ -&\ast&-\\ \hline 
\end{array}
<
\begin{array}{|ccc|}\hline 
\ast&-&-\\ -&-&-\\ \hline 
\end{array}
<
\begin{array}{|ccc|}\hline 
-&-&-\\ \ast&-&-\\ \hline 
\end{array}
$$
and there are four other maximal chains corresponding to 
$$
\begin{array}{|c|c|c|}\hline
3&2&1\\ \hline
6&5&4\\ \hline
\end{array},\quad
\begin{array}{|c|c|c|}\hline
4&2&1\\ \hline
6&5&3\\ \hline
\end{array},\quad
\begin{array}{|c|c|c|}\hline
5&2&1\\ \hline
6&4&3\\ \hline
\end{array},\quad\hbox{and}\quad
\begin{array}{|c|c|c|}\hline
5&3&1\\ \hline
6&4&2\\ \hline
\end{array}.
$$
\end{example}

\section{Description of the boundary facets}

We know that the partially ordered set of pure diagrams in $B_{M,N}$ give rise to a simplicial fan $\Delta$, where the faces are the totally ordered subsets. The facets of this simplicial complex correspond to maximal chains in the partially ordered set. According to Proposition~\ref{prop:basis} each such set is a basis for the space $B_{M,N}$. If we look at normalized Betti diagrams, we get a simplicial complex which is the hyperplane section of the simplicial fan. This is the complex described in our previous paper, in the case of Cohen-Macaulay diagrams.  

\begin{prop}\label{prop:simlicialfan}
  The simplicial cones spanned by the totally ordered sets of pure diagrams in $B_{M,N}$ form a simplicial fan. 
\end{prop}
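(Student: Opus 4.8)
The plan is to check the two defining properties of a fan directly. Write $\sigma_{\mathcal{C}}$ for the cone spanned by a chain $\mathcal{C}$ of pure diagrams in $B_{M,N}$. One must show that (a) every face of $\sigma_{\mathcal{C}}$ is again $\sigma_{\mathcal{C}'}$ for some chain $\mathcal{C}'$, and (b) $\sigma_{\mathcal{C}_1}\cap\sigma_{\mathcal{C}_2}=\sigma_{\mathcal{C}_1\cap\mathcal{C}_2}$, which by (a) is then a common face of the two cones. Property (a) is immediate from Proposition~\ref{prop:basis}: a chain is a linearly independent set, so $\sigma_{\mathcal{C}}$ is a simplicial cone whose faces are exactly the $\sigma_{\mathcal{C}'}$ with $\mathcal{C}'\subseteq\mathcal{C}$, and a subset of a chain is a chain. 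So the work lies in (b), where only the inclusion $\sigma_{\mathcal{C}_1}\cap\sigma_{\mathcal{C}_2}\subseteq\sigma_{\mathcal{C}_1\cap\mathcal{C}_2}$ is not formal.

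I would deduce (b) from the fact that the expansion of a diagram as a nonnegative combination of a chain of pure diagrams is unique, which I would prove via the following lemma. Suppose $x=\sum_{\pi\in\mathcal{D}}c_\pi\pi$ with every $c_\pi>0$ and $\mathcal{D}$ a chain with minimal element $\pi^\bot=\pi(e_0,\dots,e_p)$ of codimension $p$. Then $(e_0,\dots,e_p)$ is the minimal-shift sequence of $x$, that is $e_i=\min\{j:x_{i,j}\ne0\}$ for $i\le p$ and $x_{i,j}=0$ for $i>p$; and moreover $c_{\pi^\bot}=\min_{i\le p}x_{i,e_i}/\pi^\bot_{i,e_i}$. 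The first statement holds because any $\pi\ge\pi^\bot$ in $\mathcal{D}$ has codimension $\le p$ and, in each column $i\le p$ that it meets, has degree $\ge e_i$, while each entry of a pure diagram on its own support is strictly positive; hence there is no cancellation, $x_{i,e_i}$ is a sum of positive terms, and the positions $(i,j)$ with $j<e_i$ or $i>p$ are unoccupied. For the coefficient, since $\pi^\bot$ is supported only at the $(i,e_i)$, nonnegativity of $x-c_{\pi^\bot}\pi^\bot$ gives $c_{\pi^\bot}\le\min_i x_{i,e_i}/\pi^\bot_{i,e_i}$; for the reverse, one shows, as in the proof of Proposition~\ref{prop:basis}, that $\pi^\bot$ has a support position $(i_1,e_{i_1})$ that no other element of $\mathcal{D}$ meets (found by comparing $\pi^\bot$ with the next element of $\mathcal{D}$: take the first column in which their degree sequences differ, or the first column past the codimension of that element), so that $x_{i_1,e_{i_1}}=c_{\pi^\bot}\pi^\bot_{i_1,e_{i_1}}$ and equality holds.

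Granting the lemma, I would peel off $c_{\pi^\bot}\pi^\bot$: the diagram $x-c_{\pi^\bot}\pi^\bot$ is zero at the distinguished position $(i_1,e_{i_1})$, so its minimal-shift sequence, hence its bottom pure diagram, lies strictly above $\pi^\bot$ in the poset. Iterating, and using that the poset of pure diagrams in $B_{M,N}$ has finite height (Proposition~\ref{prop:basis}), the process terminates, and at every stage the pure diagram removed and its coefficient depend only on $x$. Thus both the chain and the coefficients in a nonnegative expansion of $x$ are determined by $x$ alone. Given $x\in\sigma_{\mathcal{C}_1}\cap\sigma_{\mathcal{C}_2}$, replace each $\mathcal{C}_i$ by the set of pure diagrams occurring with positive coefficient in the expansion of $x$ in $\sigma_{\mathcal{C}_i}$; by uniqueness this is one and the same chain $\mathcal{D}$, so $\mathcal{D}\subseteq\mathcal{C}_1\cap\mathcal{C}_2$ and $x\in\sigma_{\mathcal{D}}\subseteq\sigma_{\mathcal{C}_1\cap\mathcal{C}_2}$, which together with (a) finishes the proof.

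The step I expect to be the crux is the coefficient formula in the lemma --- showing that $\pi^\bot$ always has a support position untouched by the remaining elements of the chain, so that its coefficient is pinned down by $x$ irrespective of the chain; the cases in which $\mathcal{C}$ drops codimension (and need not be saturated) are the ones requiring care. The rest is bookkeeping with the signs of the entries of pure diagrams and with the finiteness of the poset.
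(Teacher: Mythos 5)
Your proof is correct and follows essentially the same strategy as the paper: reduce the fan property to uniqueness of the positive expansion of a diagram along a chain, observe that the minimal pure diagram is forced by the lowest nonzero positions in each column (using that all support entries of a pure diagram are positive), and peel it off inductively. Your extra step of isolating a support position of $\pi^\bot$ that no other element of the chain meets, giving the explicit formula $c_{\pi^\bot}=\min_i x_{i,e_i}/\pi^\bot_{i,e_i}$, makes the determination of the bottom coefficient a bit more transparent than the paper's argument by minimal counterexample, but the underlying mechanism is the same.
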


\begin{proof}
  We need to show that the cones meet only along faces. This is the same as to say that any element which can be written as a positive linear combination of pure diagrams in a chain can be written so in a unique way. If such a sum has only one term, the uniqueness is trivial. Thus, suppose that a diagram in $B_{M,N}$ can be written as a positive linear combination of totally ordered pure diagrams in two different ways and that this is the minimal number of terms in such an example. We then have 
$$
\beta=\sum_{i=1}^m \lambda_i \pi_i = \sum_{j=1}^k \mu_j \pi_j'
$$
where all the coefficients are positive. Look at the lowest degree in which $\beta$ is non-zero for each column. These degrees have to be given by the degrees in $\pi_1$ and by the degrees in $\pi_1'$. Thus we must have $\pi_1=\pi_1'$. If $\lambda>\mu$, we can subtract $\mu\pi_1$ from $\beta$ and from both sums to get 
$$
\beta-\lambda_1\pi_1=\sum_{i=2}^m \lambda_i \pi_i =(\mu_1-\lambda_1)\pi_1 \sum_{j=2}^k \mu_j \pi_j'  
$$
Since all the pure diagrams in the left expression are greater than $\pi_1$, the degrees in which $\beta-\lambda_1\pi_1$ is non-zero have to be given by $\pi_2$, but then the coefficient of $\pi_1$ in the right hand expression has to be zero, i.e., $\lambda_1=\mu_1$. Since this was assumed to be a minimal example, we get that the expressions for $\beta-\lambda_1\pi_1$ are term wise equal and so were the original expressions for the diagram $\beta$. 
\end{proof}

We will now show that the coefficients of the pure diagrams when a Betti diagram is expanded in the basis given by a maximal chain have nice expressions. In particular, we see that the coefficients are integers. Moreover, it will give us expressions for the inequalities that define the simplicial fan similar to the inequalities used by D.~Eisenbud and F.-O.~Schreyer to prove our conjectures in the Cohen-Macaulay case. 

\begin{prop}\label{prop:coefficients}
The coefficient of $\pi_1=\pi(d_0,d_1,\dots,d_m)$ when a Betti diagram $\beta$ in $B_{M,N}^s$ is expanded in a basis containing $\pi_0<\pi_1<\pi_2$ is given by
$$
\sum_{i=0}^n \sum_{d=M}^{d_i(\pi_0)} (-1)^i \prod_{\underset{j\ne k}{j=0}}^{m}(d_j-d)\beta_{i,d}, 
$$
when $\pi_1$ differs from $\pi_0$ in codimension and from $\pi_2$ in column $k$,  by 
$$
\sum_{i=0}^n \sum_{d=M}^{d_i(\pi_0)} (-1)^i(d_k-d_m) \prod_{\underset{j\ne k}{j=0}}^{m-1}(d_j-d)\beta_{i,d}, \qquad 
$$
when $\pi_1$ differs from $\pi_2$ in codimension and from $\pi_0$ in column $k$, by 
$$
\sum_{i=0}^n \sum_{d=M}^{d_i(\pi_0)} (-1)^i(d_\ell-d_k) \prod_{\underset{j\notin\{k,\ell\}}{j=0}}^{m}(d_j-d)\beta_{i,d}, \qquad 
$$
when $\pi_1$ differs from $\pi_2$ in column $k$ and from $\pi_0$ in column $\ell\ne k$, and by 
$$
\sum_{i=0}^n \sum_{d=M}^{d_i(\pi_0)} (-1)^i \prod_{j=0}^{m-1}(d_j-d)\beta_{i,d}, \qquad 
$$
when $\pi_1$ differs from $\pi_0$ and $\pi_2$ in codimension. In particular, all the coefficients are integers. 
\end{prop}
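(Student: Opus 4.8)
The plan is to recognize each of the four displayed expressions as the linear functional on $B_{M,N}$ that is dual, in the basis of Proposition~\ref{prop:basis}, to the chain element $\pi_1$. Since a maximal chain $\mathcal C$ of pure diagrams of codimension at least $s$ is a basis of $B_{M,N}^s$, it suffices to produce a functional $L$ with $L(\pi_1)=1$ and $L(\pi')=0$ for every other $\pi'\in\mathcal C$. As $\mathcal C$ is totally ordered and $\pi_0,\pi_1,\pi_2$ are consecutive, every $\pi'\ne\pi_1$ of $\mathcal C$ satisfies $\pi'\le\pi_0$ or $\pi'\ge\pi_2$, so only three things have to be checked: $L$ vanishes on every $\pi'\le\pi_0$, $L$ vanishes on every $\pi'\ge\pi_2$, and $L(\pi_1)=1$. (I use the convention $d_i(\pi_0)=N+i$ for $i$ above the codimension of $\pi_0$, so that the inner sum $\sum_{d=M}^{d_i(\pi_0)}$ simply runs over the whole $i$-th column in that range; one should first note this is harmless.)

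The computational engine is as follows. Each displayed functional has the shape $L(\beta)=\sum_{i=0}^n\sum_{d=M}^{c_i}(-1)^iQ(d)\beta_{i,d}$, where $c_i=d_i(\pi_0)$ and $Q$ is a product of linear forms in $d$ with integer coefficients (times an integer constant in two of the cases). Evaluating on a pure diagram $\pi'=\pi(e_0,\dots,e_t)$ and inserting the closed formula for $\pi'_{i,e_i}$, the signs $(-1)^i$ cancel and one gets a truncated interpolation sum
\[
L(\pi')=\sum_{\substack{0\le i\le t\\ e_i\le c_i}}\frac{Q(e_i)}{\prod_{\ell\ne i}(e_\ell-e_i)}.
\]
When the conditions $e_i\le c_i$ exclude no index this is the full Lagrange sum over the $t+1$ nodes $e_0,\dots,e_t$, which equals $(-1)^t$ times the coefficient of $x^t$ in $Q$; in particular it is $0$ as soon as $\deg Q<t$. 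When some indices are excluded the sum will still collapse, because we shall arrange that every surviving node is a root of $Q$.

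Now take $Q$ to be the product of linear factors read off from the displayed formula in the given case (degree $m$ when $\pi_1$ differs from $\pi_0$ in codimension, degree $m-1$ when it differs from $\pi_0$ in a column), and $c_i=d_i(\pi_0)$. For $\pi'\le\pi_0$ one has $\operatorname{codim}\pi'\ge\operatorname{codim}\pi_0$ and $e_i\le d_i(\pi_0)=c_i$ for every node index $i$, so $L(\pi')$ is a full Lagrange sum over $\operatorname{codim}\pi'+1$ nodes; since $\operatorname{codim}\pi_0>\deg Q$ in all four cases, this vanishes. For $\pi'\ge\pi_2$ one has $e_i\ge d_i(\pi_2)\ge d_i(\pi_0)=c_i$: the columns that were actually moved between $\pi_0$ and $\pi_2$ satisfy $e_i>c_i$ and drop out of the sum, while for a surviving column $e_i=d_i(\pi_0)=d_i$ is forced, and then $Q(d_i)=0$ because the index $i$ occurs among the linear factors of $Q$; hence $L(\pi')=0$. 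Finally, for $\pi_1$: if $\pi_1$ differs from $\pi_0$ in codimension then all $m+1$ nodes of $\pi_1$ survive, $\deg Q=m$, and $L(\pi_1)=(-1)^m$ times the leading coefficient of $Q$, which is $(-1)^m$, so $L(\pi_1)=1$; if $\pi_1$ differs from $\pi_0$ in a column then exactly that node of $\pi_1$ is truncated away, every term of the remaining partial sum except one is killed by a root of $Q$, and the last term is a quotient of two products of differences whose ratio is $1$ --- the integer prefactor ($d_k-d_m$, respectively $d_\ell-d_k$) in the displayed formula is precisely what cancels the matching factor in the denominator. This identifies the coefficient of $\pi_1$ with $L(\beta)$; integrality is then immediate, since $Q$ has integer coefficients and is evaluated at integers, so every coefficient of $L$ is an integer and $L(\beta)\in\mathbb Z$ whenever $\beta$ is an honest Betti diagram.

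The part requiring real care is the truncation bookkeeping: for each of the four cases one must determine exactly which node of $\pi_1$ (if any) lies outside its window $d\le d_i(\pi_0)$, and check that for every $\pi'\ge\pi_2$ the moved columns genuinely leave their windows while the unmoved ones land on the boundary $d=d_i(\pi_0)$, where $Q$ was built to vanish. One should also confirm that a consecutive triple $\pi_0<\pi_1<\pi_2$ really falls under one of the four stated types, and handle the degenerate sub-configurations --- where a moved column coincides with the column whose appearance or disappearance changes the codimension, or where two moved columns coincide --- on their own terms. Granted the combinatorial set-up, the Lagrange identity and the sign computations are routine.
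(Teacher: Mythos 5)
Your argument is correct and follows essentially the same route as the paper's: you verify that the displayed functional vanishes on every $\pi'\le\pi_0$, vanishes on every $\pi'\ge\pi_2$, and takes value $1$ on $\pi_1$, which suffices because a maximal chain is a basis of $B_{M,N}^s$. The only cosmetic difference is that for the vanishing on $\pi'\le\pi_0$ you invoke the truncated Lagrange interpolation identity directly, whereas the paper phrases the same computation as "the expression is a linear combination of the Herzog--K\"uhl equations"; for pure diagrams these are literally the same identity, so there is no real divergence in method.
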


\begin{rmk} For simplicity of notation, we use the convention that $d_i(\pi)=N$ if $i$ is greater than the codimension of $\pi$.
\end{rmk}

\begin{proof}
In all four cases, the coefficients of $\beta_{i,d}$ are zero for pure diagrams $\pi\ge \pi_2$ by construction since the sums are taken only up to degree $d_i(\pi_0)$ and the coefficients are zero in all positions where $d_i(\pi_0)=d_i(\pi_2)$. 

Furthermore, the expressions are zero on all pure diagrams $\pi$, where $\pi\le \pi_0$ by the Herzog-K\"uhl equations. Indeed, for such pure diagrams, we can extend the summation over $d$ be over all degrees from $M$ to $N$. When we do this we get and expand the products we get polynomials in $d$ and therefore, the expressions are linear combinations of the Herzog-K\"uhl equations 
$$
\sum_{i=0}^n\sum_{d=M}^N (-1)^i \beta_{i,d} d^j=0,
$$
for $j=0,1,\dots,m-1$ in the second and third case, and for $j=0,1,\dots,m$ in the first and fourth case. Observe that all pure diagrams $\pi$ with $\pi\le \pi_0$, have codimension at least the codimension of $\pi_0$ and hence satisfy these Herzog-K\"uhl equations. 

It remains to show that the value of the expressions are one on the pure diagram $\pi_1$. 
In the first expression, the only non-zero term is in column $k$ and equals  
$$
(-1)^k\prod_{\underset{j\ne k}{j=0}}^m (d_j-d_k) \cdot (-1)^k \prod_{\underset{j\ne k}{j=0}}^m \frac{1}{d_j-d_k}  = 1,
$$
in the second expression, the only non-zero term is in column $m$ and equals
$$
(-1)^m(d_k-d_m)\prod_{\underset{j\ne k}{j=0}}^m (d_j-d_m) \cdot (-1)^m \prod_{j=0}^{m-1} \frac{1}{d_j-d_m} = 1,
$$
in the third expression, the only non-zero term is in column $k$ and equals 
$$
(-1)^k(d_\ell-d_k)\prod_{\underset{j\notin\{k,\ell\}}{j=0}}^m (d_j-d_k) \cdot (-1)^k \prod_{\underset{j\ne \ell}{j=0}}^m \frac{1}{d_j-d_k} = 1
$$
and in the fourth expression, the non-zero term is in column $m$ and equals
$$
(-1)^m \prod_{j=0}^{m-1}(d_j-d_m)\cdot (-1)^{m} \prod_{j=0}^{m-1} \frac{1}{d_j-d_m} =1.
$$
\end{proof}

\begin{example}\label{ex:functionals}
  When $n=3$, $M=0$ and $N=2$ we can choose the basis of $B_{0,2}$ given by the numbering
$$
  \begin{array}{|c|c|c|c|}\hline 10&4&3&1\\ \hline 11&6&5&2\\ \hline 12&9&8&7\\ \hline 
\end{array}
$$
When expanding a Betti diagram 
$$
\begin{array}{c|cccc}
&\beta_0&\beta_1&\beta_2&\beta_3\\ \hline
0:&\beta_{0,0}&\beta_{1,1}&\beta_{2,2}&\beta_{3,3}\\
1:&\beta_{0,1}&\beta_{1,2}&\beta_{2,3}&\beta_{3,4}\\
2:&\beta_{0,2}&\beta_{1,3}&\beta_{2,4}&\beta_{3,5}\\
\end{array}
$$
into this chain we get the coefficients by applying the linear functionals corresponding to the following matrices:
$$
\begin{array}{cccc}
  \begin{array}{|cccc|}\hline
    0&0&0&6\\0&0&0&0\\0&0&0&0\\ \hline 1&&&\\ \hline
    0&-4&6&-6\\0&0&6&4\\0&0&0&0 \\ \hline5&&&\\ \hline
    0&1&-2&3\\0&2&-3&4\\0&3&-4&5 \\ \hline9&&&\\\hline
  \end{array}
  &\begin{array}{|cccc|}\hline
     0&0&0&0\\0&0&0&24\\0&0&0&0\\ \hline 2&&&\\\hline
     0&8&-12&12\\0&12&-12&8\\0&0&0&0 \\ \hline 6&&&\\\hline
     1&-1&1&-1\\0&-1&1&-1\\0&-1&1&-1 \\ \hline10&&&\\\hline
   \end{array}
  &\begin{array}{|cccc|}\hline
     0&0&6&-18\\0&0&0&-36\\0&0&0&0\\ \hline 3&&&\\\hline
     0&-6&8&-6\\0&-8&6&0\\0&0&0&10 \\ \hline 7&&&\\\hline
     0&0&0&0\\1&0&0&0\\0&0&0&0  \\ \hline 11&&&\\\hline
   \end{array}
  &\begin{array}{|cccc|}\hline
     0&8&-12&12\\0&0&0&8\\0&0&0&0\\ \hline4&&&\\\hline
     0&2&-2&0\\0&2&0&-4\\0&0&4&-10 \\ \hline8&&&\\\hline
     0&0&0&0\\0&0&0&0\\1&0&0&0  \\ \hline 12&&&\\\hline
   \end{array}
\end{array}
$$
Matrices $1$, $2$, $11$ and $12$ correspond to individual Betti numbers as we will see in the next proposition. Matrices $3$, $4$, $5$ and $6$ are of the third kind described in Proposition~\ref{prop:coefficients} since they correspond to two consecutive degree changes in distinct columns. Matrix $7$ is of the second kind, since it corresponds to a degree change followed by a change in codimension. Matrices $8$ and $9$ are of the fourth kind, since they correspond to two consecutive changes of codimension. Matrix $10$ is of the first kind since it corresponds to a change in codimension followed by a degree change. 
\end{example}

In order to prove that the simplicial fan is convex and in order to prove that any Betti diagram of a module of codimension $s$ is a positive linear combination of pure diagrams,  we need to know what is the boundary of the simplical fan. The description is very similar to the description given in our previous paper and we only have to add one more kind of boundary facet.

\begin{prop}\label{prop:boundary}
A facet of the boundary of the simplical fan given by the pure diagrams in $B_{M,N}^s$ is given by removing one element from a maximal chain such that there is a unique way to complete it to a maximal chain. There are four different cases: 
\begin{itemize}
\item[i)] The removed element is maximal or minimal, or
\item[ii)] the removed element is in the middle of a chain of three degree sequences which differ in one single column, i.e.,
  $$
  \pi(\underline d)<\pi(\underline d')<\pi(\underline d''),\qquad d_i=d'_i-1=d''_i-2, \quad\hbox{or}
  $$
\item[iii)] the removed element differ from the adjacent vertices in two adjacent degrees, i.e.,  
  $$
  \pi(\underline d)<\pi(\underline d')<\pi(\underline d''), \qquad d_{i+1}=d'_{i+1}-1,\quad d'_i=d''_i-1, \quad \hbox{or}
  $$
\item[iv)] the removed element differ from the adjacent vertices in codimension, i.e.,  
  $$
  \pi(\underline d)<\pi(\underline d')<\pi(\underline d''), \qquad\mathrm{codim}(\pi(\underline d))=\mathrm{codim}(\pi(\underline d''))+2.
  $$
\end{itemize}
\end{prop}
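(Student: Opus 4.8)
The plan is to argue combinatorially, using Proposition~\ref{prop:tableaux}. A facet of the simplicial fan inside $B_{M,N}^s$ is a maximal chain of pure diagrams of codimension at least $s$, equivalently a standard numbering of the $(N-M+1)\times(n+1)$ rectangle. By Proposition~\ref{prop:basis} all maximal chains have the same length, so every codimension-one face is obtained by deleting a single pure diagram $\pi_1$ from a maximal chain, and it lies on the boundary precisely when there is only one pure diagram that can be re-inserted in place of $\pi_1$ so as to again obtain a maximal chain. Thus the statement is local: it concerns the two steps of the chain adjacent to $\pi_1$, or the single step if $\pi_1$ is an endpoint.

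First I would describe the two kinds of step $\pi<\pi'$ that occur between consecutive terms of a maximal chain. Either $\pi'$ is obtained from $\pi$ by a \emph{degree step} in a single column $i$, raising one shift $d_i\mapsto d_i+1$, which is possible exactly when $d_i+1<d_{i+1}$ (the top shift being bounded only by $N+i$); or $\pi'$ is obtained by a \emph{codimension step}, which drops the top shift $d_s$. The structural facts to record are: a codimension step can occur in a maximal chain only when $d_s$ has already reached its largest value $N+s$ --- otherwise one could insert between $\pi$ and $\pi'$ a pure diagram of codimension one larger, contradicting non-refinability --- and, in the partial order, a pure diagram of strictly larger codimension lies strictly below one of smaller codimension, so no diagram of larger codimension can be squeezed in between two consecutive terms of the chain.

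With this dictionary the verification becomes a finite case check on the ordered pair of steps flanking $\pi_1$; write $\pi_0<\pi_1<\pi_2$ for three consecutive terms. The two steps can always be carried out in the opposite order, yielding a distinct intermediate diagram and hence a second facet, unless they obstruct one another, and one enumerates the obstructions. Two degree steps in the same column leave no room for an alternative intermediate, which is case (ii). Two degree steps in adjacent columns commute unless the higher column is raised first while the two columns sit at consecutive values in $\pi_0$, since then the lower column cannot be raised first without violating strict monotonicity of the shifts --- this is case (iii). Two codimension steps admit only $\pi_1$ as intermediate, because each is forced to drop the current top shift --- case (iv). Finally, deleting the minimum or the maximum of the chain, one re-inserts using that $\pi(M,M+1,\dots,M+n)$ is the unique minimal, and $\pi(N,N+1,\dots,N+s)$ the unique maximal, pure diagram in $B_{M,N}^s$ --- case (i). In every non-obstructed configuration the reordered diagram is exhibited explicitly; in every obstructed one a short direct computation shows $\pi_1$ is the only admissible replacement.

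The step I expect to be the main obstacle is the careful handling of the codimension steps: invoking the ``$d_s=N+s$ before dropping'' condition exactly where it is needed, excluding spurious intermediate diagrams of larger codimension via the partial order, and above all completely enumerating the mixed configurations where a degree step in the top row is immediately followed by a codimension step --- raising $d_s$ to $N+s$ and then dropping it --- and deciding whether deleting the middle vertex there contributes a boundary facet and under which heading. Pinning down the ``consecutive values'' hypothesis in case (iii) precisely --- it is exactly the obstruction to reordering imposed by strict monotonicity of the shifts --- is the other delicate point.
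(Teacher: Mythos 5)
Your approach is the same as the paper's: reduce to a local case analysis on the two covering relations flanking the removed vertex, distinguishing degree steps from codimension steps, with the key observation that a codimension drop is a covering relation only when the top shift equals $N+s$. Your explicit treatment of cases (i), (ii), (iv), and especially your "commutation obstruction" reading of case (iii) (adjacent columns at consecutive values, higher column raised first), is a correct and sharper formulation of what the paper states more tersely.

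The gap is exactly where you flag a concern and then stop. Consider the mixed pair in which a degree step raises the top shift $d_s$ from $N+s-1$ to $N+s$ and the next step is the codimension drop removing $d_s$. The paper disposes of all mixed pairs with "we can alter the order," but here that reordering is impossible: any pure diagram $\pi$ with $\pi(d_0,\dots,d_{s-1},N+s-1)<\pi<\pi(d_0,\dots,d_{s-1})$ must agree with both endpoints in columns $0,\dots,s-1$, so if it has codimension $s$ its top shift lies in $\{N+s-1,N+s\}$, forcing $\pi=\pi(d_0,\dots,d_{s-1},N+s)$, and if it has codimension $s-1$ it equals $\pi_2$. Thus deleting the middle vertex does yield a boundary facet, yet it matches none of (i)--(iv): the codimension of $\pi_0$ exceeds that of $\pi_2$ by $1$, not $2$, so (iv) fails, and there is no second column changing degree, so (iii) as stated fails. (One sees this already in the example with $n=2$, $M=0$, $N=1$: deleting $\pi(0,1,3)$ from the chain $\pi(0,1,2)<\pi(0,1,3)<\pi(0,1)<\cdots$ leaves a codimension-one face contained in a unique maximal chain.) You would need to either show this configuration can be subsumed under one of the four headings via the convention $d_i(\pi)=N$ for $i>\mathrm{codim}(\pi)$ --- which does not appear to make (iii) apply --- or add it as an explicit further case; merely noting it as a possible obstacle leaves the proof incomplete at precisely the point where the published argument is also weakest.
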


\begin{proof} A boundary facet is a codimension one face of a simplicial cone and hence given by the removing of one vertex. Since it is on the boundary, it is contained in a unique maximal dimensional cone, so there has to be a unique extension of the chain into a maximal chain. 

This clearly happens if we remove the maximal or the minimal element, since there are unique such elements in the partially ordered set. Suppose therefore that we remove $\pi(\underline d')$ and that the adjacent vertices in the chain are $\pi(\underline d)<\pi(\underline d')$ and $\pi(\underline d'')>\pi(\underline d')$. If the difference between $\pi(\underline d)$ and $\pi(\underline d'')$ are in two columns that are not adjacent, we can find another element between $\pi(\underline d)$ and $\pi(\underline d'')$ by exchanging the order in which the two degrees are increased. This can also be done if the columns are adjacent, but the degrees differ by more than one. 

If one of the two differ from $\pi(\underline d')$ in codimension and the other by an increase of the degree in one column, we can alter the order and obtain another element between $\pi(\underline d)$ and $\pi(\underline d'')$. 

The remaining cases are those described by ii), iii) and iv).
\end{proof}

\begin{rmk} In Example~\ref{ex:functionals} matrices $1$ and $12$ correspond to the first kind of facets. Matrices $2$ and $11$ correspond to the second kind of boundary facets. Matrices $4$ and $6$ correspond to boundary facets of the third kind and matrices $8$ and $9$ correspond to boundary facets of the fourth kind. The remaining matrices correspond to inner faces of the fan. 
\end{rmk}

\begin{thm}\label{thm:convexity}
  The  simplicial fan of pure diagrams in $B_{M,N}^s$ is convex.
\end{thm}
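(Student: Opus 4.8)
The plan is to exhibit, for each facet $F$ on the boundary of the fan $\Delta$, a linear functional $\ell_F$ on $B^s_{M,N}$ that vanishes identically on $F$ and is non-negative on \emph{every} pure diagram in $B^s_{M,N}$. Granting this, convexity follows from the elementary fact that a simplicial fan whose maximal cones all have full dimension has convex support if and only if each of its boundary facets spans a supporting hyperplane of $|\Delta|$; in that case $|\Delta|$ is the intersection of the corresponding closed half-spaces and is therefore convex. Since each maximal cone of $\Delta$ is, by Proposition~\ref{prop:basis}, the positive span of a maximal chain of pure diagrams, a functional vanishing on $F$ spans such a supporting hyperplane as soon as it has constant sign on the (finite) set of all pure diagrams; thus the whole problem reduces to a non-negativity check.

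The functionals $\ell_F$ are the ones already written down in Proposition~\ref{prop:coefficients}. By Proposition~\ref{prop:boundary}, a boundary facet is obtained by deleting a single vertex $\pi_1$ from the \emph{unique} maximal chain containing $F$, and the functional ``coefficient of $\pi_1$'' of Proposition~\ref{prop:coefficients} vanishes on every other vertex of that chain (hence on $F$) while taking the value $1$ on $\pi_1$; uniqueness of the completion of $F$ to a maximal chain shows that $\ell_F$ depends only on $F$. When $\pi_1$ is the maximal or the minimal element (case (i) of Proposition~\ref{prop:boundary}), $\ell_F$ is a positive multiple of a single entry $\beta_{i,d}$, and this is non-negative on every pure diagram $\pi(\underline e)$, $\underline e=(e_0,\dots,e_t)$, because each entry of $\pi(\underline e)$ equals either $0$ or $(-1)^i\prod_{j\ne i}\tfrac1{e_j-e_i}$, which is positive since $\prod_{j\ne i}(e_j-e_i)$ has sign $(-1)^i$. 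Cases (ii)--(iv) use the corresponding explicit formula of Proposition~\ref{prop:coefficients} (or its evident variant).

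It then remains to prove that $\ell_F(\pi(\underline e))\ge 0$ for an arbitrary pure diagram $\pi(\underline e)$. Since $\pi(\underline e)$ has a single non-zero entry in each column $i\le t$, the value $\ell_F(\pi(\underline e))$ collapses to a truncated sum
$$
\sum_{\substack{0\le i\le t\\ e_i\le d_i(\pi_0)}}\frac{P(e_i)}{\prod_{j\ne i}(e_j-e_i)},
$$
where $P$ is the polynomial in the relevant formula of Proposition~\ref{prop:coefficients} and $\pi_0$ is the vertex of the chain just below $\pi_1$. Exactly as in the proof of Proposition~\ref{prop:coefficients}, this sum vanishes when $\pi(\underline e)\le\pi_0$ (by the Herzog--K\"uhl equations, which hold because $\operatorname{codim}\pi(\underline e)\ge\operatorname{codim}\pi_0$) and when $\pi(\underline e)\ge\pi_2$ (the truncation removes the nonzero entries and $P$ vanishes at the remaining degrees), and it equals $1$ at $\pi_1$. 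For the pure diagrams lying strictly between, or incomparable to, $\pi_0$ and $\pi_2$ the argument is a sign computation: the denominator has sign $(-1)^i$, the sign of $P(e_i)$ on each interval delimited by the degrees of $\pi_1$ is read off from the product form of $P$, and the truncated sum is regrouped by means of the divided-difference identity $\sum_{i=0}^tP(e_i)\big/\prod_{j\ne i}(e_i-e_j)=[x^t]P$ (the coefficient of $x^t$, valid over the full untruncated range when $\deg P\le t$), so that it telescopes into a manifestly non-negative expression. The hard part is controlling the effect of the truncation $e_i\le d_i(\pi_0)$ in each of the four types of boundary facet; the defining configurations of cases (ii)--(iv) in Proposition~\ref{prop:boundary} are precisely what make these partial sums non-negative. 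Once this is carried out for all types, every boundary facet of $\Delta$ spans a supporting hyperplane of $|\Delta|$, and $\Delta$ is convex.
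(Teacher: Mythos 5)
Your overall strategy is sound in principle: a full-dimensional simplicial fan is convex precisely when every boundary facet spans a supporting hyperplane of the support, and the functionals of Proposition~\ref{prop:coefficients} are natural candidates. But your route is genuinely different from the paper's, and in the form you state it the proof is incomplete at exactly the step that carries all the difficulty.

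The paper does \emph{not} attempt to show that the facet functional $\ell_F$ is non-negative on \emph{every} pure diagram. Instead it runs a local-to-global argument: if $\Delta$ were not convex, some boundary facet's hyperplane would cut through a \emph{neighboring} maximal cone, so it suffices to check, for each pair of boundary facets sharing a ridge, that the missing vertex $\pi_1''$ of the adjacent chain has positive $\pi_1'$-coefficient when expanded in the chain through $\pi_1'$. Because $\pi_1''$ is a single, very specific pure diagram sitting one step away from $\pi_1'$, the functional collapses to exactly two nonzero terms, and the paper computes these two-term sums explicitly in the three possible codimension configurations, obtaining manifestly positive values such as $\tfrac{1}{(d_k+1-d_{m+1})(d_k+1-d_m)}$ and $\tfrac{2}{(d_\ell-d_k)(d_\ell-d_k+1)}$. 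That is the whole content of the proof, and it is entirely elementary.

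You instead try to establish $\ell_F(\pi(\underline e))\ge 0$ for an \emph{arbitrary} pure diagram $\pi(\underline e)$ in $B_{M,N}^s$. This is a much stronger positivity claim (it is essentially the pure-diagram case of Theorem~\ref{thm:expansion}, which the paper proves only by invoking Eisenbud--Schreyer), and the sentence where you would actually prove it reads ``The hard part is controlling the effect of the truncation $e_i\le d_i(\pi_0)$\dots Once this is carried out for all types\dots''. That is precisely the step that is not carried out, and it does not follow from the divided-difference identity you cite, since that identity applies to the \emph{untruncated} sum over all $i$, whereas the whole subtlety here is the truncation $e_i\le d_i(\pi_0)$: the truncated sum is not a coefficient of $P$ and does not obviously telescope. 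So as written there is a gap in the argument at its central point. If you want to stay with your global-positivity strategy you would need to actually analyze the truncated sum for each facet type against an arbitrary $\underline e$, which is substantially more work than the paper's two-term local computations; alternatively, switching to the paper's local-convexity reduction lets you restrict to the single diagram $\pi_1''$ and makes the sign check a one-line calculation.
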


\begin{proof}
  We will use the following observation which allows us to go from local convexity to global convexity: If the simplicial fan is not convex, there will be one boundary facet which supporting hyperplane pass through the interior of a neighboring simplicial cone. We can see this by looking a line segment between two vertices which contains points outside the simplicial fan. If we take a two-dimensional plane through these two points and through generic inner point of the fan, we get a two-dimensional picture where we can find two edges meeting at an inwards angle. The supporting hyperplane of the boundary facet meeting the two-dimensional plane in one of these edges meets the interior of the simplicial cone corresponding to the other edge. 

Thus we will prove that any two boundary facets meet in a convex manner. If any of the two facets are of the first or second kind, described in Proposition~\ref{prop:boundary}, it is clear that any Betti diagram will lie on the correct side of the supporting hyperplane, since this hyperplane is given by the vanishing of a single Betti number. Thus we will assume that the facets are of the third or fourth kind. 

Let $K$ denote the number of pure diagrams in a maximal chain in $B_{M,N}^s$. 
Any two boundary facets meeting along a codimension one face gives us a chain with $K-2$ pure diagrams. If the two missing vertices are on levels differing by more than one, there is a unique way of completing the chain into a maximal chain and the two facets are faces of the same simplicial cone. Hence they meet in a convex way. 

Thus we can assume that the two vertices missing are on adjacent levels and that there a least element $\pi_3$ above these and a greatest element $\pi_0$ below them. Now we can see from the difference in codimension between $\pi_0$ and $\pi_3$ that facets of the third kind described in Proposition~\ref{prop:boundary} cannot meet facets of the fourth kind in this way.

We must have that the two facets are given by removing $\pi_1'$ or $\pi_2''$ from the chains $\pi_0<\pi_1'<\pi_2'<\pi_3$ and $\pi_0<\pi_1''<\pi_2''<\pi_3$ int the following lattice
$$
\setlength{\unitlength}{0.7cm}
\begin{picture}(4,3)(0,0)
  \put(2,0){\circle{0.3}$\pi_0$}
  \put(3,1){\circle{0.3}$\pi_1''$}
  \put(1,1){\circle{0.3}$\pi_1'$}
  \put(0,2){\circle{0.3}$\pi_2'$}
  \put(2,2){\circle{0.3}$\pi_2''$}
  \put(1,3){\circle{0.3}$\pi_3$}
  \put(2,0){\line(-1,1){2}}
  \put(2,0){\line(1,1){1}}
  \put(1,1){\line(1,1){1}}
  \put(0,2){\line(1,1){1}}
  \put(3,1){\line(-1,1){2}}
\end{picture}
$$
We need to show that the coefficient of $\pi_1'$ is positive when $\pi_1''$ is expanded into the chain $\pi_0<\pi_1'<\pi_2'<\pi_3$. Note that the codimension cannot differ by more than three between $\pi_0$ and $\pi_3$ and if it differs by three, there is only one chain between $\pi_0$ and $\pi_3$. Thus we can assume that the difference in codimension is zero, one or two. 

In order to do this we use Proposition~\ref{prop:coefficients} which gives us an expression of this coefficient involving only two terms. 

Suppose that $\pi_0$ and $\pi_3$ differ in codimension by two. Then the coefficient is given by the fourth expression of 
Proposition~\ref{prop:coefficients} and the coefficient for $\pi_1'=\pi(d_0,d_1,\dots,d_m)$ when $\pi_1''=\pi(d_0,d_1,\dots,d_{k-1},d_k+1,d_{k+1},\dots,d_m)$ is expanded in the basis containing $\pi_0<\pi_1'<\pi_2'$ is given by 
$$
\sum_{i=0}^n \sum_{d\le d_i(\pi_0)} (-1)^i \prod_{j=0}^{m-1}(d_j-d)\beta_{i,d}(\pi_1'')
$$
which has only two non-zero terms in columns $m$ and $m+1$ and equals
$$
\begin{array}{l}
\displaystyle
(-1)^{m} 
\prod_{j=0}^{m-1} (d_j-d_{m}) (-1)^{m}\frac{d_k-d_{m}}{d_k+1-d_{m}}\cdot\frac{1}{d_{m+1}-d_m}\prod_{j=0}^{m-1}\frac{1}{d_j-d_{m}} \\\displaystyle+ 
(-1)^{m+1} \prod_{j=0}^{m-1} (d_j-d_{m+1}) (-1)^{m+1}\frac{d_k-d_{m+1}}{d_k+1-d_{m+1}}\frac{1}{d_m-d_{m+1}}\prod_{j=0}^{m-1}\frac{1}{d_j-d_{m+1}} \\\displaystyle
=\frac{1}{d_{m+1}-d_m} \left(
\frac{d_k-d_m}{d_k+1-d_m}-\frac{d_k-d_{m+1}}{d_k+1-d_{m+1}}
\right) = \frac{1}{(d_k+1-d_{m+1})(d_k+1-d_m)} >0,
\end{array}
$$
since $d_k+1<d_m<d_{m+1}$. 

If $\pi_0$ and $\pi_3$ differ in codimension by one we get that the coefficient of $\pi_1'$ when $\pi_1''=\pi(d_0,d_1,\dots,d_{m-1}$ is expanded in the basis $\pi_0<\pi_1'<\pi_2'<\pi_3$ is given by 
$$
\sum_{i=0}^n \sum_{d\le d_i(\pi_0)} (-1)^i(d_{k}+2-d_k) \prod_{\underset{j\notin\{k,k+1\}}{j=0}}^{m}(d_j-d)\beta_{i,d}(\pi_1'')
$$
which has only two non-zero terms from columns $k$ and $k+1$ and equals 
$$
\begin{array}{l}
\displaystyle
2(-1)^k
\prod_{\underset{j\notin\{k,k+1\}}{j=0}}^{m}(d_j-d_k)(-1)^k\prod_{\underset{j\ne k}{j=0}}^{m-1}\frac{1}{d_j-d_k} \\\displaystyle
+ 2(-1)^{k+1}\prod_{\underset{j\notin\{k,k+1\}}{j=0}}^{m}(d_j-d_k-1)(-1)^{k+1}\prod_{\underset{j\ne k+1}{j=0}}^{m-1}\frac{1}{d_j-d_k-1} \\=\displaystyle
2\frac{d_m-d_k}{d_k+1-d_k}-2\frac{d_m-d_k-1}{d_k-d_k-1} = 2>0.
\end{array}
$$

The last possibility is the codimension of $\pi_0$ equals the codimension of $\pi_3$ and in this case $\pi_0<\pi_1'<\pi_2'$ corresponds to a facet of the third kind and we get that the coefficient of $\pi_1'=\pi(d_0,d_1,\dots,d_k,d_k+2,d_{k+2},\dots,d_m)$, when $\pi_1''=\pi(d_0,d_1,\dots,d_{\ell-1},d_\ell+1,d_{\ell+1},\dots,d_m)$ is expanded in the basis $\pi_0<\pi_1'<\pi_2'<\pi_3$ is given by 
$$
\sum_{i=0}^n \sum_{d\le d_i(\pi_0)} (-1)^i(d_{k}+2-d_k) \prod_{\underset{j\notin\{k,k+1\}}{j=0}}^{m}(d_j-d)\beta_{i,d}(\pi_1'').
$$
Again there are only two non-zero terms and the coefficient is equal to 
$$
\begin{array}{l}
\displaystyle
2(-1)^k
\prod_{\underset{j\notin\{k,k+1\}}{j=0}}^{m}(d_j-d_k)(-1)^k\frac{d_\ell-d_k}{d_\ell+1-d_k}\prod_{\underset{j\ne k}{j=0}}^{m}\frac{1}{d_j-d_k} \\\displaystyle
+ 2(-1)^{k+1}\prod_{\underset{j\notin\{k,k+1\}}{j=0}}^{m}(d_j-d_k-1)(-1)^{k+1}\frac{d_\ell-d_k-1}{d_\ell+1-d_k-1}\prod_{\underset{j\ne k+1}{j=0}}^{m}\frac{1}{d_j-d_k-1} \\=\displaystyle
2\frac{d_\ell-d_k}{d_\ell+1-d_k}-2\frac{d_\ell-d_k-1}{d_\ell-d_k} = \frac{2}{(d_\ell-d_k)(d_\ell-d_k+1)}>0,
\end{array}
$$
since $d_\ell-d_k$ and $d_\ell-d_k+1$ are both negative or both positive. 
\end{proof}

\section{The expansion of any Betti diagram into sums of pure diagrams}
We know from the work of D.~Eisenbud and F.-O.~Schreyer that the inequalities given by the exterior facets of the cone are valid on all minimal free resolutions, not only on the resolutions of Cohen-Macaulay modules. The inequalities that we have to add because we now look at chains of pure diagrams of different codimensions can be seen to be limits of the inequalities already known. 

As in the previous section, we look at Betti diagrams of graded modules under the restriction that $\beta_{i,j}=0$ unless $M+i\le j\le N+i$, i.e., diagrams in $B_{M,N}$. 

\begin{thm}\label{thm:expansion}
  Any Betti diagram of a finitely generated graded module $P$ of codimension $s$ can be uniquely written as a positive linear combination of pure diagrams in $B_{M,N}^s$, where $M$ is the least degree of any generator of $M$ and $N$ is the regularity of $P$.
\end{thm}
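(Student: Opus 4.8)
The plan is to prove that the Betti diagram $\beta$ of $P$ lies in the support $|\Delta|$ of the simplicial fan $\Delta$ of pure diagrams of codimension at least $s$ in $B_{M,N}$; once this is known, uniqueness of the expansion follows from Proposition~\ref{prop:simlicialfan}, since the cones of $\Delta$ meet only along faces. The first point to settle is that $\beta\in B_{M,N}^s$. Because the resolution of $P$ is minimal and its generators lie in degrees $\ge M$, the least degree occurring in $F_i$ increases strictly with $i$, so $\beta_{i,j}=0$ for $j<M+i$; from $N=\operatorname{reg}P$ we get $\beta_{i,j}=0$ for $j>N+i$; and, $\operatorname{codim}P$ being $s$, the Herzog--K\"uhl equations say that $\beta$ satisfies the $s$ linear relations cutting out $B_{M,N}^s$ inside $B_{M,N}$.

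The second step uses convexity. By Theorem~\ref{thm:convexity} the cone $|\Delta|$ is convex, and being a finite union of simplicial cones it is a full-dimensional polyhedral cone in $B_{M,N}^s$; hence $|\Delta|=\{\,\gamma\in B_{M,N}^s:\ell_F(\gamma)\ge 0\ \text{for every boundary facet }F\,\}$, where $\ell_F$ is the functional vanishing on $F$ and normalised to be nonnegative on $|\Delta|$ --- equivalently, by Proposition~\ref{prop:coefficients}, the coefficient of the vertex deleted to form $F$, given there by an explicit formula. It thus suffices to check $\ell_F(\beta)\ge 0$ for the four kinds of boundary facet of Proposition~\ref{prop:boundary}. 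For kinds i) and ii) this is immediate, since --- as observed in the proof of Theorem~\ref{thm:convexity} --- $\ell_F$ is then a positive multiple of a single Betti number, and Betti numbers are nonnegative.

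For kinds iii) and iv) the point is that $\ell_F$ is, up to a positive scalar, a limit of the linear functionals of Eisenbud and Schreyer, which are nonnegative on the Betti diagram of every minimal free resolution, so that any limit of them is nonnegative as well, in particular on $\beta$. For kind iii) the facet lies in the Cohen--Macaulay subfan of codimension $\operatorname{codim}\pi_1$, and there --- since by Proposition~\ref{prop:coefficients} the functional $\ell_F$ depends only on the triple $\pi_0<\pi_1<\pi_2$ --- it agrees with a positive multiple of one of Eisenbud and Schreyer's functionals. For kind iv), where the codimensions of $\pi_0<\pi_1<\pi_2$ drop by one at each step, one appends one, resp.\ two, large degrees $D_1\ll D_2$ to $\pi_1$ and $\pi_2$ to move into the Cohen--Macaulay fan of codimension $\operatorname{codim}\pi_0$; since a rescaled pure diagram $D_1\cdots D_k\,\pi(d_0,\dots,d_c,D_1,\dots,D_k)$ tends to $\pi(d_0,\dots,d_c)$ as $D_1\ll\dots\ll D_k\to\infty$, the corresponding Eisenbud--Schreyer functionals converge, after rescaling, to $\ell_F$.

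The main obstacle is this deformation for facets of kind iv): one has to verify that the enlarged degree sequences still form a chain, that deleting the deformed middle vertex yields a boundary --- not an interior --- facet of the higher-codimensional Cohen--Macaulay fan (so that its supporting functional really is one of Eisenbud and Schreyer's), and that the coefficient functionals read off from Proposition~\ref{prop:coefficients} with the large parameters substituted do converge, after rescaling, to $\ell_F$. Granting this, all four families of inequalities hold on $\beta$, so $\beta\in|\Delta|$; then $\beta$ lies in some maximal cone of $\Delta$, and expanding it in the basis given by the corresponding maximal chain (Proposition~\ref{prop:basis}) yields the desired positive linear combination of pure diagrams, which is unique by Proposition~\ref{prop:simlicialfan}.
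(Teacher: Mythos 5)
Your overall strategy is the same as the paper's: place $\beta$ in $B_{M,N}^s$, invoke the convexity of the simplicial fan (Theorem~\ref{thm:convexity}) to reduce membership in $|\Delta|$ to the boundary-facet inequalities from Propositions~\ref{prop:boundary} and~\ref{prop:coefficients}, dispose of kinds i) and ii) since the functional is a single Betti number, and deduce the remaining inequalities from Eisenbud--Schreyer together with a limiting argument. Uniqueness then follows from Proposition~\ref{prop:simlicialfan}, exactly as you say. Your explicit verification that $\beta\in B_{M,N}^s$ (minimality and regularity give the degree bounds, codimension $s$ gives the Herzog--K\"uhl equations) is a reasonable addition the paper leaves implicit.

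There are, however, two interrelated gaps. First, your treatment of kind iii) assumes that the Eisenbud--Schreyer functionals directly cover the Cohen--Macaulay subfan of every codimension $m=\operatorname{codim}\pi_1$, so that no limit is needed there. That is not what the paper uses: the paper only takes the Eisenbud--Schreyer inequalities for granted when $m=n$, and obtains the kind iii) inequality for $m<n$ by the same limit $t\to\infty$ it uses elsewhere, replacing $\pi_1=\pi(d_0,\dots,d_m)$ by $\pi_1^t=\pi(d_0,\dots,d_m,d_m{+}1{+}t,\dots,d_m{+}n{-}m{+}t)$ of codimension $n$ (and likewise $\pi_0,\pi_2$), dividing by $t^{n-m}$, and letting $t\to\infty$. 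A boundary facet of the codimension-$m$ pure fan sitting inside the full Betti-table space is not literally one of the Eisenbud--Schreyer hyperplanes for $\mathbb P^{n-1}$, so the step ``it agrees with a positive multiple of one of Eisenbud and Schreyer's functionals'' needs justification you have not supplied. Second, you correctly identify that your kind iv) argument is incomplete, and in fact the deformation you describe only raises the codimension to $\operatorname{codim}\pi_0$, which may still be strictly less than $n$; you would then be stuck with the same kind iii)-for-$m<n$ problem. The paper avoids this entirely by a formal observation: the kind iv) inequality $\sum_i\sum_d(-1)^i\prod_{j=0}^{m-1}(d_j-d)\beta_{i,d}$ is literally the kind iii) inequality with $m$ increased by one and $k$ chosen as the new last-but-one index, so that after this relabelling only kind iii) for all $m\le n$ needs to be proved, and that is done in one stroke by the limit argument above. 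Your proposal therefore needs to (a) drop the claim that kind iii) for $m<n$ is directly Eisenbud--Schreyer and subject it to the same limit as kind iv), and (b) push the deformation all the way to codimension $n$, not just to $\operatorname{codim}\pi_0$, so that the limiting functionals really are the ones Eisenbud and Schreyer prove nonnegative.
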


\begin{proof}
  By Proposition~\ref{prop:boundary} we know what are the boundary facets of the simplicial fan given by the pure diagrams in $B_{M,N}$ and by  Proposition~\ref{prop:coefficients} we know how to obtain the inequalities given by the boundary facets. We need to show that any Betti diagram of a graded module satisfies these inequalities. We have two different kinds of inequalities. The first comes from the boundary facets of the third kind and can be written as 
$$
\sum_{i=0}^n \sum_{d=M}^{d_i(\pi_0)} (-1)^i \prod_{\underset{j\notin\{k,k+1\}}{j=0}}^{m}(d_j-d)\beta_{i,d}(P) \ge 0
$$
and the other comes from boundary facets of the fourth kind and can be written as 
$$
\sum_{i=0}^n \sum_{d=M}^{d_i(\pi_0)} (-1)^i \prod_{j=0}^{m-1}(d_j-d)\beta_{i,d}(P)\ge 0.
$$
As we can see from these expressions, they are very similar and the work of D.~Eisenbud and F.-O.~Schreyer shows that the first kind of inequality holds for $m=n$. 

We can see that the second kind of inequality is equal to the first kind of inequality if we increase $N$  and $m$ by one and choose $k=m-1$. Thus it is sufficient to prove that the first kind of inequality always holds for any $m\le n$. 

In order to do this we look at the inequality we get by exchanging $\pi_1=\pi(d_0,d_1,\dots,d_m)$ by $\pi_1^t=\pi(d_0,d_1,\dots,d_m,d_m+1+t,d_m+2+t,\dots,d_m+n-m+t)$, and similarly exchanging $\pi_0$ and $\pi_2$ by $\pi_0^t$ and $\pi_2^t$.
$$
\begin{array}{rl}
\displaystyle
\lim_{t\rightarrow \infty} &\displaystyle
\frac{1}{t^{n-m}} \sum_{i=0}^n \sum_{d=M}^{d_i(\pi_0^t)} (-1)^i \prod_{\underset{j\notin\{k,k+1\}}{j=0}}^{m}(d_j-d)\prod_{j=m+1}^n(d_m+j-m+t-d)\beta_{i,d} \\ &\displaystyle
=
\sum_{i=0}^n \sum_{d=M}^{d_i(\pi_0)} (-1)^i \prod_{\underset{j\notin\{k,k+1\}}{j=0}}^{m}(d_j-d)\beta_{i,d}(P)
\end{array}
$$
and the limit is non-negative since for each integer $t\ge 0$, the expression under the limit in the left hand side is non-negative. 
\end{proof}

\begin{rmk}
  One of the questions raised by D.~Eisenbud and F.-O.~Schreyer was what was the description of the convex cone cut out by all of their inequalities. The answer to this question is that this convex cone in $B_{M,N}$ equals the convex cone of all Betti diagrams in $B_{M,N}$ of graded modules up to multiplication by non-negative rational numbers. This follows from the theorem and the fact that all the inequalities we need when bounding the regularity are given by limits of their inequalities. 

We can also see that the unique way of writing any Betti diagram of a module into a chain of pure diagrams leads to a way of writing the diagram as a linear combination of diagrams of Cohen-Macaulay modules, one of each codimension between the codimension of $M$ and the projective dimension of $M$. As in the Cohen-Macaulay case, we get an algorithm for finding the expansion of a given Betti diagram by subtracting as much as possible of the pure diagram corresponding to the lowest shifts. 
\end{rmk}

\begin{example}
For $M=k[x,y,z]/(x^2,xy,xz^2)$ we get the Betti diagram
$$
\begin{array}{c|cccc}
  &1&3&3&1 \\ \hline
0:&1&-&-&-\\
1:&-&2&1&-\\
2:&-&1&2&1\\
\end{array}
$$
which can be expanded into 
$$
6\cdot\boxed{\begin{array}{cccc}\frac{1}{30}&-&-&-\\-&\frac16&\frac16&-\\-&-&-&\frac{1}{30}\end{array}}
+12\cdot\boxed{\begin{array}{cccc}\frac{1}{40}&-&-&-\\-&\frac{1}{12}&-&-\\-&-&\frac{1}{8}&\frac{1}{15}\end{array}}
+2\cdot\boxed{\begin{array}{cccc}\frac{1}{12}&-&-&-\\-&-&-&-\\-&\frac{1}{3}&\frac{1}{4}&-\end{array}}
+1\cdot\boxed{\begin{array}{cccc}\frac{1}{3}&-&-&-\\-&-&-&-\\-&\frac{1}{3}&-&-\end{array}}
$$
The coefficients can also be obtained by the functionals corresponding to the matrices $5$, $6$, $8$ and $9$ from Example~\ref{ex:functionals}.
\end{example}

We now will go on and prove a generalized version of the Multiplicity Conjecture in terms of the Hilbert series. In order to do this, we first prove that the Hilbert series behaves well with respect to the partial ordering on the normalized pure diagrams. 

\begin{prop}\label{prop:Hilbertseries}
  The Hilbert series is strictly increasing on the partially ordered set of normalized pure diagrams generated in degree zero. 
\end{prop}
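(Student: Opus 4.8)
The plan is to reduce the statement to two elementary moves on degree sequences and to verify that each of them strictly increases the Hilbert series \emph{coefficientwise}. Write $f\succeq 0$ to mean that the power series $f$ has non-negative coefficients. For a degree sequence $\underline d=(0,d_1,\dots,d_s)$ we have $H(\bar\pi(\underline d),t)=(1-t)^{-n}\sum_i(-1)^i\bar\pi_{i,d_i}(\underline d)t^{d_i}$ and $\bar\pi(\underline d)=d_1\cdots d_s\,\pi(\underline d)$. Since $\bar\pi(\underline d)$ is a positive rational multiple of the Betti diagram of an honest Cohen--Macaulay module having a pure resolution of type $\underline d$ --- such a module exists by Eisenbud--Fl{\o}ystad--Weyman, resp.\ Eisenbud--Schreyer --- the series $H(\bar\pi(\underline d),t)$ satisfies $H(\bar\pi(\underline d),t)\succeq0$ and has constant term $1$. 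Moreover, any relation $\bar\pi(\underline d)\le\bar\pi(\underline e)$ of degree-zero pure diagrams factors as a chain of codimension drops $\bar\pi(d_0,\dots,d_s)\le\bar\pi(d_0,\dots,d_{s-1})$ followed by a chain of single steps that raise one $d_i$ (with $i\ge1$) by $1$; it therefore suffices to treat these two elementary moves and to observe that the factorization contains a strict step whenever $\bar\pi(\underline d)\ne\bar\pi(\underline e)$.

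The codimension-drop move is handled by Newton's divided-difference recursion for the function $x\mapsto t^x$ (equivalently, a short partial-fraction computation), which after renormalization --- using that the pure diagram of $(d_1,\dots,d_s)$ is the one of $(0,d_2-d_1,\dots,d_s-d_1)$ shifted by $d_1$ --- takes the form
\begin{equation}\label{eq:drop}
 H(\bar\pi(d_0,\dots,d_{s-1}),t)-H(\bar\pi(d_0,\dots,d_s),t)=\frac{d_1\cdots d_{s-1}}{(d_2-d_1)\cdots(d_s-d_1)}\,t^{d_1}\,H(\bar\pi(0,d_2-d_1,\dots,d_s-d_1),t).
\end{equation}
The right-hand side is a positive constant times $t^{d_1}$ times the Hilbert series of a module, hence is $\succeq0$ and nonzero, so this move strictly increases $H$.

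For the move $\underline d\mapsto\underline d^{(i)}$ replacing $d_i$ by $d_i+1$ ($1\le i\le s$), introduce the codimension-$(s+1)$ sequence $\underline D=(d_0,\dots,d_i,d_i+1,d_{i+1},\dots,d_s)$. The divided-difference identity $f[S\cup\{a\}]-f[S\cup\{b\}]=(a-b)\,f[S\cup\{a,b\}]$ applied to $f(x)=t^x$, $a=d_i+1$, $b=d_i$, together with the renormalization, gives
\begin{equation}\label{eq:ins}
 H(\bar\pi(\underline d^{(i)}),t)-H(\bar\pi(\underline d),t)=\tfrac1{d_i}\bigl(H(\bar\pi(\underline d),t)-H(\bar\pi(\underline D),t)\bigr).
\end{equation}
I then induct on $s-i$, the distance of the changed column from the top. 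For $i=s$, dropping the top entry of $\underline D=(d_0,\dots,d_s,d_s+1)$ returns $\underline d$, so by \eqref{eq:drop} the bracket in \eqref{eq:ins} is a positive $t^{d_1}$-multiple of a module Hilbert series, and the step strictly increases $H$. For $i<s$, apply \eqref{eq:drop} to $\underline D$, dropping its top entry $d_s$: this writes $H(\bar\pi(\underline D),t)=H(\bar\pi(\underline D^\flat),t)-\Theta$ with $\Theta\succeq0$ nonzero, where $\underline D^\flat=(d_0,\dots,d_i,d_i+1,d_{i+1},\dots,d_{s-1})$ has codimension $s$ and agrees with $\underline d$ in columns $0,\dots,i$. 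Hence the bracket in \eqref{eq:ins} equals $\bigl(H(\bar\pi(\underline d),t)-H(\bar\pi(\underline D^\flat),t)\bigr)+\Theta$, and since $\bar\pi(\underline D^\flat)\le\bar\pi(\underline d)$ is realized by single steps raising only columns $i+1,\dots,s$ --- all with smaller distance from the top, hence covered by the induction hypothesis --- the first summand is $\succeq0$; as $\Theta$ is $\succeq0$ and nonzero, \eqref{eq:ins} yields a strictly increasing step. This completes the induction and the proof.

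The main obstacle is the degree-raising move: it is not directly accessible from \eqref{eq:drop}, so one must route it through the auxiliary codimension-$(s+1)$ sequence $\underline D$ via \eqref{eq:ins}, and the crucial choice is the induction parameter $s-i$, arranged so that a \emph{second} use of \eqref{eq:drop} produces a manifestly nonnegative correction term $\Theta$ while the remaining discrepancy collapses to moves closer to the top. The remaining ingredients --- the partial-fraction identity, the normalizations by the products $d_1\cdots d_s$, the sign bookkeeping, and the check that the elementary factorization of a general relation contains a strict step --- are routine.
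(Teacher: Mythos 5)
Your proof is correct, but it takes a genuinely more computational route than the paper's. The paper treats the same two covering relations (drop the top $d_s$, or raise one $d_i$ by one) and in each case argues indirectly: the polynomial $S(\bar\pi',t)-S(\bar\pi,t)$ is supported on only $s$ (resp.\ $s+1$) degrees and is divisible by $(1-t)^{s-1}$ (resp.\ $(1-t)^s$), so by a dimension count the solution space is one-dimensional and the difference must equal $\lambda\,t^{d_1}$ times the Hilbert series of a pure diagram on the corresponding shorter/longer degree sequence; the sign of $\lambda$ is then read off from a single coefficient. That uniqueness argument handles the degree-raising move in one stroke, with no induction. You instead derive the explicit divided-difference identities \eqref{eq:drop} and \eqref{eq:ins} for $t^x$ and, because \eqref{eq:ins} expresses the degree-raising increment through an auxiliary codimension-$(s+1)$ sequence rather than producing a manifestly nonnegative quantity outright, you need the extra induction on $s-i$ to close the argument; I checked both identities and the induction, and they are correct (in particular, $\underline D^\flat\le\underline d$ via unit raises of columns strictly above $i$, so the hypothesis applies, and the correction $\Theta$ from a second use of \eqref{eq:drop} supplies the strictness). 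What your version buys is explicit positive constants in front of the differences; what the paper's version buys is brevity, since the Herzog--K\"uhl uniqueness step replaces both the identity for $t^x$ and the induction. Both arguments invoke the Eisenbud--Fl{\o}ystad--Weyman / Eisenbud--Schreyer existence of modules with pure resolutions to know that each $H(\bar\pi(\underline d),t)$ has nonnegative coefficients, so neither is more elementary on that point.
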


\begin{proof}
  First assume that $\bar\pi<\bar\pi'$ is a maximal chain of pure diagrams of codimension $s$. The Hilbert series can be recovered from the Betti diagrams and we can write 
$$
  H(\bar\pi',t)-H(\bar\pi,t) = \frac{1}{(1-t)^n} (S(\bar\pi',t)-S(\bar\pi,t)) 
$$
where $S(\beta,t) = \sum_{i=0}^n (-1)^i \sum_{j} \beta_{i,j}t^j$.
Since the polynomials $S(\bar\pi,t)$ and $S(\bar\pi',t)$ both have constant term $1$, we get that the polynomial $S(\bar\pi',t)-S(\bar\pi,t)$, has only $s+1$ non-zero terms. Since it also satisfies the Herzog-K\"uhl equations, we have a unique solution to this up to a scalar multiple, and $H(\bar\pi,t)-H(\bar\pi',t)$ is $\lambda t^{d_1}$ times the Hilbert series of a pure module. Since we know that all pure diagrams have positive Hilbert series, we only have to check that that this is a non-negative multiple. The sign of $\lambda$ can be obtained by looking at the sign of the term of $S(\bar\pi',t)-S(\bar\pi,t)$ which comes from $S(\bar\pi',t)$ and which is not present in $S(\bar\pi,t)$. This term comes with the sign $(-1)^i$, if it is in column $i$, which proves that $\lambda$ has to by positive. 

We now consider the case where $\bar\pi<\bar\pi'$ is a maximal chain such that the codimension of $\bar\pi$ is $s$ and the codimension of $\bar\pi'$ is $s-1$. 
If we now look at the difference of the Hilbert series, 
$$
H(\bar\pi',t)-H(\bar\pi,t) = \frac{1}{(1-t)^{n}}(S(\bar\pi',t)- S(\bar\pi,t))
$$
where the polynomial $S(\bar\pi',t)-S(\bar\pi,t)$ has zero constant term and $s$ non-zero terms. Since we know that it is divisible by $(1-t)^{s-1}$, there is again a unique possibility, which is $\lambda t^{d_1}$ times the Hilbert series of a module with pure resolution. This time, we can see that the sign of the last term is $-(-1)^{s} = (-1)^{s-1}$, which shows that $\lambda$ is positive. 
\end{proof}

\begin{thm}\label{thm:MultConj} For any finitely generated module $M$ of projective dimension $r$ and codimension $s$ generated in degree $0$, we have that 
$$
\beta_0(M) H(\bar\pi(0,m_1,m_2,\dots,m_r),t) \le H(M,t) \le \beta_0(M)
H(\bar\pi(0,M_1,M_2,\dots,M_s),t),
$$
where $m_1,m_2,\dots,m_r$ are the minimal shifts and $M_1,M_2,\dots,M_s$ are the maximal shifts in a minimal free resolution of $M$. Equality on either side implies that the resolution is pure. 

In particular, the right hand inequality implies the Multiplicity Conjecture, i.e., 
$$
e(M)\le \beta_0(M)\frac{M_1M_2\cdots M_s}{s!}
$$
with equality if and only if $M$ is Cohen-Macaulay with a pure resolution. 
\end{thm}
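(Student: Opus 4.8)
The plan is to obtain the statement as a corollary of Theorem~\ref{thm:expansion} together with the monotonicity of the Hilbert series, Proposition~\ref{prop:Hilbertseries}. First I would apply Theorem~\ref{thm:expansion} to the Betti diagram of $M$: since $M$ is generated in degree $0$ its least generator degree is $0$, so $\beta(M)$ lies in $B_{0,N}^s$ with $N=\operatorname{reg}(M)$ and can be written uniquely as $\beta(M)=\sum_i a_i\bar\pi_i$ with all $a_i>0$, where $\bar\pi_0<\bar\pi_1<\dots<\bar\pi_\ell$ is a chain of normalized pure diagrams. Comparing the $(0,0)$ entry gives $\sum_i a_i=\beta_{0,0}(M)=\beta_0(M)$, and since $\beta\mapsto H(\beta,t)$ is linear we get $H(M,t)=\sum_i a_iH(\bar\pi_i,t)$.

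The second step is to pin down the two extreme diagrams in the chain. Every entry of a pure diagram in a position $(\operatorname{codim},\cdot)$ is strictly positive, so there is no cancellation in $\sum_i a_i\bar\pi_i$; moreover, directly from the definition of the ordering, the codimension is non-increasing and the degree in each column is non-decreasing as one goes up the chain. Running the argument used in the proof of Proposition~\ref{prop:simlicialfan} column by column therefore shows that $\bar\pi_0$ has codimension equal to the projective dimension $r$ and that $d_i(\bar\pi_0)$ is the least $j$ with $\beta_{i,j}(M)\neq0$, i.e. $\bar\pi_0=\bar\pi(0,m_1,\dots,m_r)$. Dually, writing $S(\beta,t)=\sum_i(-1)^i\sum_j\beta_{i,j}t^j$ and $S(\bar\pi_i,t)=(1-t)^{s_i}Q_i(t)$ with $Q_i(1)>0$, the order of vanishing of $S(\beta(M),t)$ at $t=1$ equals $\min_i s_i=s_\ell=\operatorname{codim}(M)=s$; and $\bar\pi_\ell$, being the top of the chain, realizes the largest degree in each of the columns $0,\dots,s$ that it occupies, so $\bar\pi_\ell=\bar\pi(0,M_1,\dots,M_s)$.

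The two inequalities now follow immediately: by Proposition~\ref{prop:Hilbertseries} one has $H(\bar\pi_0,t)\le H(\bar\pi_i,t)\le H(\bar\pi_\ell,t)$ coefficientwise for every $i$, and using $\sum_i a_i=\beta_0(M)$,
$$
\beta_0(M)H(\bar\pi_0,t)\;\le\;\sum_i a_iH(\bar\pi_i,t)=H(M,t)\;\le\;\beta_0(M)H(\bar\pi_\ell,t).
$$
If equality holds on the left, then $\sum_i a_i\bigl(H(\bar\pi_i,t)-H(\bar\pi_0,t)\bigr)=0$ is a sum of series with non-negative coefficients and positive weights, so $H(\bar\pi_i,t)=H(\bar\pi_0,t)$ for all $i$; since the Hilbert series is \emph{strictly} increasing along chains, the chain collapses to one element and $\beta(M)=\beta_0(M)\bar\pi_0$ is pure. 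The right-hand equality case is symmetric.

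Finally, for the Multiplicity Conjecture I would invoke the Huneke--Miller formula: the pure diagram $\bar\pi(0,M_1,\dots,M_s)$ has multiplicity $M_1M_2\cdots M_s/s!$, so $\beta_0(M)H(\bar\pi_\ell,t)$ is the Hilbert series of something of dimension $n-s=\dim M$ and multiplicity $\beta_0(M)M_1\cdots M_s/s!$. Coefficientwise domination of two Hilbert series of the same dimension yields, after dividing the Hilbert functions by $k^{\dim M-1}$ and letting $k\to\infty$, the inequality $e(M)\le\beta_0(M)M_1\cdots M_s/s!$. For the equality statement, only the chain elements of minimal codimension $s$ contribute to the leading term of the Hilbert polynomial, so $e(M)=\sum_{i:\,s_i=s}a_i\,e(\bar\pi_i)$; since the proof of Proposition~\ref{prop:Hilbertseries} shows that the multiplicity is strictly increasing along same-codimension steps of the chain, equality $e(M)=\beta_0(M)\,e(\bar\pi_\ell)$ forces all the weight onto $\bar\pi_\ell$, hence $\beta(M)=\beta_0(M)\bar\pi(0,M_1,\dots,M_s)$ is pure of length $s$ and therefore corresponds to a Cohen--Macaulay module. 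I expect the main obstacle to be exactly the bookkeeping of the previous paragraph — establishing that the extreme diagrams of the chain are the asserted ones (codimensions $r$ and $s$, and the claimed degree sequences) and carrying the coefficientwise Hilbert-series inequality down to the sharp multiplicity statement; the rest is a formal consequence of Theorem~\ref{thm:expansion} and Proposition~\ref{prop:Hilbertseries}.
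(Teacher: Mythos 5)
Your proposal is correct and follows the same route as the paper's proof: expand $\beta(M)$ via Theorem~\ref{thm:expansion} into a positive combination along a chain of pure diagrams, apply the monotonicity from Proposition~\ref{prop:Hilbertseries}, and then pass from coefficientwise Hilbert-series domination to the multiplicity bound by matching dimensions. You supply the bookkeeping the paper leaves implicit — identifying $\bar\pi_0=\bar\pi(0,m_1,\dots,m_r)$ and $\bar\pi_\ell=\bar\pi(0,M_1,\dots,M_s)$ from the positivity of the pure-diagram entries and the order of vanishing of $S(\beta(M),t)$ at $t=1$, and the two equality analyses — but the strategy is identical.
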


\begin{proof}
  We can use Theorem~\ref{thm:expansion} to write the Betti diagram of $M$ as a positive linear combination of pure diagrams. Since the Hilbert series by Proposition~\ref{prop:Hilbertseries} is increasing on along the chain, we get the first inequalities of the theorem. 

The multiplicity of $M$ is obtained from the leading coefficient of the Hilbert polynomial. The coefficients of the Hilbert series is eventually equal to the Hilbert polynomial, which shows that the multiplicity is increasing with the Hilbert series as long as the degree of the Hilbert polynomial is the same. Since this is the case for $M$ and $\pi(0,M_1,M_2,\dots,M_s)$, we get the conclusion of the Multiplicity conjecture. 
\end{proof}

\begin{rmk} For a Cohen-Macaulay module $M$, the Hilbert coefficients are positive linear combinations of the entries of the $h$-vector. Since we can reduce $M$ modulo a regular sequence and keep the same Betti diagram we get that the Hilbert coefficients are increasing along chains of normalized pure diagrams. Thus the generalization of the multiplicity conjecture made J.~Herzog and X.~Zheng~\cite{Herzog-Zheng} is a consequence of Theorem~\ref{thm:MultConj}.
\end{rmk}

\end{document}